\def\bee{\begin{equation*}}
\def\eee{\end{equation*}}
\def\e{\epsilon}
\def\lf{\left}
\def\ijb{i\bar{j}}
\def\ri{\right}
\def\a{{\alpha}}
\def\ii{\sqrt{-1}}
\def\jbar{{\bar\jmath}}
\def\K{K\"ahler }
\def\KR{K\"ahler-Ricci }
\def\ddbar{\partial\bar\partial}
\def\be{\begin{equation}}
\def\ee{\end{equation}}
\def\lf{\left}
\def\ri{\right}
\def\a{{\alpha}}
\def\re{\text{Re}}
\def\e{\epsilon}
\def\ijb{{i\jbar}}
\def\Ric{\text{\rm Ric}}
\def\Rm{\text{\rm Rm}}
\def\re{\text{\rm Re}}
\def\re{\text{\rm Re}}
\def\ii{\sqrt{-1}}
\newtheorem{thm}{Theorem}[section]
\newtheorem{lem}{Lemma}[section]
\newtheorem{cor}{Corollary}[section]
\theoremstyle{definition}
\newtheorem{defn}{Definition}[section]
\theoremstyle{remark}
\numberwithin{equation}{section}
\begin{document}
\title{An existence time estimate for K\"ahler-Ricci flow}
\author{Albert Chau$^1$}
\address{Department of Mathematics,
The University of British Columbia, Room 121, 1984 Mathematics
Road, Vancouver, B.C., Canada V6T 1Z2} \email{chau@math.ubc.ca}

\author{Ka-Fai Li}

\address{Department of Mathematics,
The University of British Columbia, Room 121, 1984 Mathematics
Road, Vancouver, B.C., Canada V6T 1Z2} \email{kfli@math.ubc.ca}

\author{Luen-Fai Tam$^2$}

\thanks{$^1$Research
partially supported by NSERC grant no. \#327637-06}
\thanks{$^2$Research partially supported by Hong Kong RGC General Research Fund  \#CUHK 14305114}

\address{The Institute of Mathematical Sciences and Department of
 Mathematics, The Chinese University of Hong Kong,
Shatin, Hong Kong, China.} \email{lftam@math.cuhk.edu.hk}
\thanks{\begin{it}2000 Mathematics Subject Classification\end{it}.  Primary 53C55, 58J35.}

\begin{abstract} Fix a complete noncompact \K manifold $(M^n,h_0)$ with bounded
  curvature. Let $g(t)$ be a bounded curvature solution to the \KR flow
starting from some $g_0$  which is uniformly equivalent to $h_0$. We estimate the
  existence time of $g(t)$ together with $C^0$ bounds and curvature
  bounds, where the estimates depend only on $h_0$ and the $C^0$ distance
  between $g_0$ and $h_0$. We also generalize these results to cases when
 $g_0$ may have unbounded curvature.

\noindent{\it Keywords}:  K\"ahler-Ricci flow,  existence time
\end{abstract}

\maketitle\markboth{Albert Chau, Ka-Fai Li and Luen-Fai Tam} {An existence time estimate for K\"ahler-Ricci flow}
\section{Introduction}
 A motivation of this work is to understand the \KR flow on complete noncompact \K manifolds with initial metrics which may not have bounded curvature.  The \K Ricci flow is the following evolution equation for \K metric $g_0$ on an $n$ dimensional complex
manifold $M^n$

 \be\label{krf}
 \left\{
   \begin{array}{ll}
     \displaystyle\frac{\partial g_{\ijb}}{\partial t}   =-R_{\ijb} \\
     g(0)  = g_0.
   \end{array}
 \right.
 \ee
 When $g_0$ is complete with curvature bounded by $k_0$, Shi showed in \cite{Shi2} that \eqref{krf} has a short time solution $g(t)$ on $M\times[0, T)$ having bounded curvature for all $t$ (see Theorem \ref{shishortime}).  Moreover, $T$ and the curvature bound of $g(t)$ can be estimated  respectively in terms on $n, k_0$ and $n, k_0, t$.

   A fundamental problem is to determine the maximal such $T$ for a given $g_0$, and we will denote this time by $T_{g_0}$.  Extending the result of Tian-Zhang \cite{TZ} for the compact case, Lott-Zhang gave an analytic characterization of $T_{g_0}$ for general complete \K manifolds (see Theorem \ref{LottZhang}) which in particular extended Shi's result above. In this work we want to estimate $T_{g_0}$ in terms of $C^0$ norm of $g_0$.  Firstly we will use Theorem \ref{LottZhang} to show that:

 \begin{equation}\label{MT1}  C_1 h_0 \leq g_0 \leq C_2h_0 \,\,\,\,\ \implies \,\,\,\,\ C_1 T_{h_0} \leq T_{g_0} \leq C_2 T_{h_0}
\end{equation}
 where $C_i >0$ are constants and $g_0, h_0$ are complete \K metrics with bounded curvature (see Theorem \ref{t-existencetime}).    As a consequence, {\it $T_{g}$ is continuous on the space of \K metrics on $M$  in the $C^0$ norm}.  The usefulness of \eqref{MT1} comes when considering the flow of an arbitrary metric $g_0$ which is uniformly equivalent to the initial condition  $h_0$ of a known solution $h(t)$, in which case Theorem \ref{t-existencetime} tells us the existence time for $g(t)$ is uniformly comparable to that of $h(t)$.  In particular, {\it if $h(t)$ exists for all time, then so does the flow of any bounded curvature \K metric which is equivalent to $h_0$}. For example,  one can show that if $g_0$ has bounded curvature and $g_0$ is uniformly equivalent to a complete \K metric $h_0$ with bounded curvature so that the $\Ric(h_0)\le 0$, then $T_{g_0}=\infty$.

 In the above context, one may also ask about a priori estimates for the solution $g(t)$ in terms of the known solution $h(t)$.  In the context of \eqref{MT1} we will prove that: for any $0<S<C_1T_{h_0}$ we have

\begin{equation}\label{MT2}
      C_{n,S,h_0,C_1, C_2, \Lambda}^{-1} h_0\le g(t)\le C_{n,S,h_0,C_1, C_2, \Lambda}h_0 \,\,\,\,\,\, \text{on} \,\,\, M\times[0,S]
\end{equation}
where  $\Lambda=\sup_M\|Rm(g_0)\|_{g_0}$.  On the other hand, by restricting the time interval we can actually remove the dependence on the curvature of $g_0$:  for any $0<s<S<\frac{C_1}{n}T_{h_0}$ we have
\begin{equation}\label{MT3}
      C_{n,s,S,h_0,C_1, C_2}^{-1} h_0\le g(t)\le C_{n,s,S,h_0,C_1, C_2}h_0 \,\,\,\,\,\, \text{on} \,\,\, M\times[s,S]
\end{equation}
See Theorem \ref{t-C0-bound} for these estimates.   As mentioned above in \cite{Shi2} Shi proved the estimates in  \eqref{MT2}, but with the upper time limit $C_1T_{h_0}$ replaced by some $T(n, \Lambda)$ depending only on $n, \Lambda$ (in this case, as in \cite{Shi2}, one may just take $h_0=g_0$).   The point is that in many cases, $C_1T_{h_0}$ will be strictly larger than  $T(n, \Lambda)$, thus leading to a better estimate of $T_{g_0}$ and corresponding estimates.

 From \eqref{MT1} and \eqref{MT2} we may then conclude higher order estimates as in Theorem \ref{t-higherorder}.   One can then apply these estimates to study \KR flow with initial metric which may have unbounded curvature and existence of solutions $g(t)$ starting from $C^0$ limits of bounded curvature metrics uniformly equivalent to some fixed $h_0$  with bounded curvature (Theorem \ref{t-generalization}).   This  generalizes some of our earlier results in \cite{ChauLiTam}.

 The paper is organized as follows.  In \S2.1 we prove the existence time comparison in \eqref{MT1}.  In \S2.2 we establish the comparison in \eqref{MT2}, then establish the corresponding higher order estimates \eqref{MT2}, \eqref{MT3} in \S2.3.  In \S3 we generalize to initial metrics $g_0$ which may only be Hermitian continuous.  Finally we collect some known results, which we will use throughout, in the Appendix.

\section{existence time and estimates}

Suppose $(M^n,g_0)$ is a complete noncompact \K manifold with bounded curvature, and let $g(t)$ be a solution to \KR flow \eqref{krf} on $M\times[0,T)$ with $g(0)=g_0$.  We make the following:

\begin{defn}
\begin{enumerate}
We call the solution $g(t)$
\item {\it a bounded curvature solution starting from $g_0$} if $g(t)$ has uniformly bounded curvature on $M\times[0,S]$ for all $S<T$;
\item {\it the maximal bounded curvature solution starting from $g_0$} if $g(t)$ satisfies the condition in (1) and $T$ is maximal with this property, in which case we denote $T=T_{g_0}$.
\end{enumerate}
\end{defn}

By Theorem \ref{uniqueness}, a bounded curvature solution starting from $g_0$, on $M\times[0, T)$, is indeed the unique solution with bounded curvature.

 Now let $(M^n, h_0)$ be a fixed complete noncompact \K manifold with bounded curvature.  We define the following spaces of \K metrics.
\begin{defn} For any $0<a<b$ and $\Lambda>0$,   we define the sets
$$  \mathcal{G}(h_0;a, b, \Lambda)\subset \mathcal{G}(h_0;a, b) \subset  \mathcal{G}(h_0;a) $$
as follow

 \begin{enumerate}

\item $\mathcal{G}(h_0;a)$ is the set of bounded curvature \K metrics $g_0$ on $M$ such that $ah_0\le g_0\le C(g_0)h_0$ for some constant $C(g_0)$ which may depend on $g_0$
\item $ \mathcal{G}(h_0;a, b)$ is the set of bounded curvature \K metrics $g_0$ on $M$ such that $ah_0\le g_0\le b h_0$
\item $ \mathcal{G}(h_0;a, b, \Lambda)$ is the set of \K metrics $g_0$ on $M$ with sectional curvatures bounded by
$ \Lambda$ and $ah_0\le g_0\le b h_0$
\end{enumerate}
\end{defn}

  For $g_0$ in each of the above classes, we want to estimate $T_{g_0}$ and various bounds on the corresponding maximal bounded curvature solution.  First we need the following result by Lott-Zhang \cite{Lott-Zhang}.

\begin{thm}(Lott-Zhang \cite{Lott-Zhang}) \label{LottZhang} Let $g_0$ be a complete \K metric with bounded curvature on a non-compact manifold $M$.  Then $T_{g_0}$ is equal to the supremum of the
numbers $T$ for which there is a bounded function $F_{T}\in C^{\infty}(M)$
such that
\begin{enumerate}
\item [(i)] $g_0-T\Ric(g_0)+\sqrt{-1}\partial\bar{\partial}F_{T} \geq c_T g_0$ for some $c_T >0$
 \item[(ii)]  $|F_T|$ and the quantities $|\nabla^l\Rm(g_0)|_{g_0}$, $|\nabla^l\ddbar F_T|_{g_0}$, for $0\le l\le 2$, are uniformly bounded on $M$.
\end{enumerate}
  Moreover if $T$ satisfies (i) and (ii), then for any $T'<T$, there is a constant $C$ depending $T'$, $c_T$, and the bound on the quantities in (ii) such that
$$
C^{-1}g_0\le g(t)\le Cg_0
$$
on $M\times[0,T']$.
\end{thm}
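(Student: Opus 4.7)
The plan is to prove the two halves of the supremum characterization separately and to read the ``Moreover'' $C^0$ bound off the hard direction.

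For the easy inequality, fix any $T<T_{g_0}$ and let $g(t)$ be the maximal bounded curvature solution. I would set
\[
F_T(x):=\int_0^T \log\frac{\det g(s)(x)}{\det g_0(x)}\,ds,
\]
and integrate the flow equation together with the identity $-\Ric(g(s))=\ii\ddbar\log\det g(s)$ from $s=0$ to $s=T$ to obtain
\[
g(T)=g_0-T\Ric(g_0)+\ii\ddbar F_T.
\]
Since $g(t)$ has bounded curvature on $[0,T]$ it is uniformly equivalent to $g_0$ there, so $g(T)\ge c_T g_0$ for some $c_T>0$, which is (i). The bounds on $|F_T|$ and on covariant derivatives of $\Rm(g_0)$ and $\ii\ddbar F_T$ in (ii) then follow from Shi's higher-order parabolic derivative estimates applied to $g(t)$ and the initial curvature bound on $g_0$.

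For the hard inequality, assume (i) and (ii) hold at time $T$. I would reduce \eqref{krf} to a parabolic complex Monge--Amp\`ere equation by using the reference family
\[
\omega_t^{\rm ref}:=\Big(1-\frac{t}{T}\Big)\omega_0+\frac{t}{T}\big(\omega_0-T\Ric(\omega_0)+\ii\ddbar F_T\big),
\]
which by (i) satisfies $\omega_t^{\rm ref}\ge \min(1,c_T)\omega_0$ uniformly for $t\in[0,T]$. Writing $\omega(t)=\omega_t^{\rm ref}+\ii\ddbar\psi(t)$ with $\psi(0)=0$, and using $-\Ric(\omega_0)=\ii\ddbar\log\det g_0$, a direct computation shows that \eqref{krf} is equivalent to the scalar equation
\[
\frac{\p\psi}{\p t}=\log\frac{(\omega_t^{\rm ref}+\ii\ddbar\psi)^n}{\omega_0^n}-\frac{F_T}{T},\qquad \psi(0)=0.
\]
For any fixed $T'<T$ I would first prove uniform $C^0$ bounds on $\psi$ and $\dot\psi$, and then a Yau-type trace $C^2$ estimate via an auxiliary function of the form $\log\tr_{\omega_0}\omega(t)-A\psi$; condition (ii) furnishes the bisectional curvature bounds on $\omega_0$ and $\omega_t^{\rm ref}$ that this computation needs. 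Combining them yields the ``Moreover'' conclusion $C^{-1}g_0\le g(t)\le Cg_0$ on $[0,T']$ with $C$ depending on $T'$, $c_T$ and the quantities in (ii). Shi's higher-order parabolic estimates then upgrade these to uniform bounds on $\Rm(g(t))$ and its covariant derivatives, and a standard openness/closedness argument based on Shi's short-time existence theorem promotes local existence to a bounded curvature solution on all of $[0,T']$; letting $T'\uparrow T$ gives $T_{g_0}\ge T$.

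The principal obstacle is executing the Monge--Amp\`ere a priori estimates on a complete noncompact $M$, since maxima of the Yau auxiliary function need not be attained. I would handle this by an Omori--Yau-type maximum principle, which is available thanks to the bounded geometry of $g_0$ and $\omega_t^{\rm ref}$ provided by (ii), used in tandem with the uniform boundedness of $\psi$, $\dot\psi$ and $F_T$ to rule out bad behavior at infinity; alternatively one may work on a compact exhaustion with barriers decaying at spatial infinity. A secondary technical point is making the openness step rigorous in the noncompact setting, where Shi's theorem must be reapplied to the evolving metric without losing uniformity in the curvature bound.
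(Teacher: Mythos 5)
Your overall route is sound, and it is in substance the \emph{same} route as the one underlying the paper's proof: the paper does not redo any of the Monge--Amp\`ere estimates but simply cites the proof of Theorem 4.1 of \cite{Lott-Zhang} (whose argument is exactly your reference-form reduction, the $C^0$/$\dot\psi$ bounds, and the $\log\tr_{\omega_0}\omega - A\psi$ trace estimate, carried out on a noncompact manifold via a bounded-geometry maximum principle), and then closes the loop with Theorem \ref{t-EvanKrylov} and the extension criterion in Theorem \ref{shishortime}(ii). Your construction $F_T=\int_0^T\log\frac{\det g(s)}{\det g_0}\,ds$ for the easy direction is also the standard Tian--Zhang/Lott--Zhang potential and the identity $g(T)=g_0-T\Ric(g_0)+\ii\ddbar F_T$ is correct.

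There is, however, one step that fails as written. At the end of the hard direction you say that ``Shi's higher-order parabolic estimates then upgrade'' the uniform equivalence $C^{-1}g_0\le g(t)\le Cg_0$ to bounds on $\Rm(g(t))$. Shi's derivative estimates take a \emph{curvature} bound as input and cannot be fed a mere $C^0$ metric equivalence; the passage from $C^{-1}g_0\le g(t)\le Cg_0$ to a uniform curvature bound on $[0,\min(T_{g_0},T))$ is precisely where one needs Evans--Krylov, or equivalently a Calabi-type third-order estimate as in \cite{SW} (this is Theorem \ref{t-EvanKrylov} in the paper). Once that is supplied, Theorem \ref{shishortime}(ii) gives $T\le T_{g_0}$, and your openness/closedness continuation becomes unnecessary: one can simply apply the a priori estimates to the already-existing maximal bounded-curvature solution. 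A second, more minor point: in the easy direction the bound on $|\nabla^2\ddbar F_T|_{g_0}$ does not follow from Shi's interior estimates alone, since those degenerate like $t^{-1}$ at order two and the resulting integrand is not integrable at $s=0$; one needs the version of the derivative estimates that uses initial bounds on $\nabla^l\Rm(g_0)$ (which condition (ii) presupposes anyway, and which the paper arranges by first reducing to bounded geometry of infinite order).
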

\begin{proof}

Suppose the above conditions are satisfied for some $T<\infty$.  If $0<T<T_{g_0}$ then the proof Theorem 4.1 in \cite{Lott-Zhang}  shows the existence of a bounded $F_T$ which satisfies (i) and also has bounded covariant derivatives of all orders, thus also satisfying (ii).  Conversely, given $F_T$ satisfying (i) and (ii),   by the estimates in the proof of \cite[Theorem 4.1]{Lott-Zhang} (up to and including Lemma 4.26)  there exists constants  $C_1, C_2>0$ such that $C_1g\leq g(t)\leq C_2 g$ for all $t\in [0, \min(T_{g_0}, T))$. Then by Theorem \ref{t-EvanKrylov}, we have that the curvature of $g(t)$ is uniformly bounded on $M\times[0, \min(T_{g_0}, T))$ which implies $0<T<T_{g_0}$ by Theorem \ref{shishortime}.    This completes the proof that $T_g$ is characterized by (i) and (ii).

Moreover, from the above mentioned estimates in \cite{Lott-Zhang}, one can see that the last statement in the Theorem also holds.
\end{proof}

\subsection{existence time estimate}

The first result of this section is the following existence time estimate:

\begin{thm}\label{t-existencetime}
  $T_{g_0}\ge aT_{h_0}$ for all $g_0\in \mathcal{G}(h_0;a)$.
  \end{thm}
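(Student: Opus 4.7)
The plan is to verify the Lott--Zhang criterion (Theorem \ref{LottZhang}) for $g_0$ at any time $aT$ with $T<T_{h_0}$; letting $T\nearrow T_{h_0}$ will then yield $T_{g_0}\ge aT_{h_0}$. Fix such a $T$ and apply Theorem \ref{LottZhang} to $h_0$ to obtain a smooth bounded function $F$ on $M$ with
$$ h_0 - T\,\Ric(h_0) + \sqrt{-1}\,\partial\bar\partial F \;\ge\; c\,h_0 $$
for some $c>0$, together with uniform bounds on $|F|$, $|\nabla^l\Rm(h_0)|_{h_0}$, and $|\nabla^l\partial\bar\partial F|_{h_0}$ for $0\le l\le 2$. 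The idea is to transport $F$ into an analogous function $F'$ for $g_0$ at time $aT$.

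Take the ansatz $F' := a(F - Tu)$ with $u:=\log(\det g_0/\det h_0)$. Using the K\"ahler identity $\sqrt{-1}\,\partial\bar\partial u = \Ric(h_0)-\Ric(g_0)$, the $\Ric(g_0)$ contributions cancel:
$$ g_0 - aT\,\Ric(g_0) + \sqrt{-1}\,\partial\bar\partial F' \;=\; g_0 - aT\,\Ric(h_0) + a\,\sqrt{-1}\,\partial\bar\partial F. $$
From the lower bound $g_0\ge a h_0$ the right side majorizes $a\bigl(h_0 - T\,\Ric(h_0) + \sqrt{-1}\,\partial\bar\partial F\bigr)\ge ac\,h_0\ge (ac/C(g_0))\,g_0$, verifying condition (i) of Theorem \ref{LottZhang} for $g_0$ at time $aT$. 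Next, $ah_0\le g_0\le C(g_0)h_0$ gives $a^n\le\det g_0/\det h_0\le C(g_0)^n$, so $|u|$ and hence $|F'|$ are bounded.

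The main obstacle is the remaining regularity part of (ii), namely the bounds on $|\nabla^l\Rm(g_0)|_{g_0}$ and $|\nabla^l\partial\bar\partial F'|_{g_0}$ for $l=1,2$: a priori $g_0\in\mathcal{G}(h_0;a)$ only has bounded curvature, not bounded higher covariant derivatives. I would resolve this by a Shi-smoothing reduction: replace $g_0,h_0$ by their \KRF evolutions $g(\epsilon),h(\epsilon)$ at small time $\epsilon>0$, both of which acquire uniformly bounded covariant derivatives of all orders by Shi's estimates. Using that $\|\Ric(g_0)\|_{g_0}$ and $\|\Ric(h_0)\|_{h_0}$ are bounded, the comparison persists up to an arbitrarily small perturbation $(a-\delta(\epsilon))h(\epsilon)\le g(\epsilon)\le C'(\epsilon)h(\epsilon)$ with $\delta(\epsilon)\to0$. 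Running the construction above on the smoothed pair yields $T_{g(\epsilon)}\ge(a-\delta(\epsilon))T_{h(\epsilon)}$, and letting $\epsilon\to 0$ with $T_{g(\epsilon)}=T_{g_0}-\epsilon$ and $T_{h(\epsilon)}=T_{h_0}-\epsilon$ delivers $T_{g_0}\ge aT_{h_0}$.
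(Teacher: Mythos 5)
Your proposal is correct and follows essentially the same route as the paper: both verify the Lott--Zhang criterion for $g_0$ by borrowing the potential for $h_0$ and adding the determinant-ratio term $\log(\det g_0/\det h_0)$ to trade $\Ric(h_0)$ for $\Ric(g_0)$, and both handle the higher-order regularity in condition (ii) by the same Shi-smoothing reduction with $T_{g(\epsilon)}=T_{g_0}-\epsilon$. The only cosmetic difference is that the paper first normalizes to $a=1$ via the scaling identity $T_{\lambda h_0}=\lambda T_{h_0}$, whereas you carry the factor $a$ through the computation directly.
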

  \begin{proof} First observe that if $\lambda>0$ is a constant, {\re then} $T_{\lambda h_0}=\lambda T_{h_0}$. Hence without loss of generality, we may assume that $a=1$.  Also, we assume without loss of generality that  $g_0 , h_0$ have bounded geometry of order $\infty$.  For if not, we let $g(t), h(t)$ are the corresponding solutions as in Theorem \ref{shishortime}, then we first prove the Theorem for $g(\e), h(\e)$ for arbitrary small $\e$ and then let $\e\to 0$ and use the fact that $g(\e), h(\e)$ converge uniformly to $g_0, h_0$ (respectively) on $M$ by Corollary  \ref{c-shishortime}, and $T_{g(\e)}=T_g-\e$ and $T_{h(\e)}=T_h-\e$ by Theorem \ref{uniqueness}. Note that by Theorem \ref{t-EvanKrylov}, all the covariant derivatives of $g(\e)$ with respect to $h_0$ are bounded. So we may assume   in addition  that all the covariant derivatives of $g_0$ with respect to $h_0$ are bounded.

  Now for any  $0<T< T_{h_0}$, we have for $g_0\in \mathcal{G}(h_0;a)$,
\bee
\begin{split}
g_0-T\Ric(g_0)&= h_0-T\Ric(h_0)+T(\Ric(g_0)-\Ric(h_0))+ (g_0-h_0).
\end{split}
\eee
By  Theorem \ref{LottZhang}, there is a smooth bounded function $f$ with bounded covariant derivatives with respect to $h_0$ such that
$$
h_0-T\Ric(h_0)+\ii\ddbar f\ge C_1h_0
$$
for some $C_1>0$.  Then letting $F=\log \frac{\omega_0^n}{\eta^n}$ where $\omega_0$ and $\eta$ are the \K forms of $g_0$ and $h_0$ respectively, gives

\bee
\begin{split}
g_0-T\Ric(g_0)+\ii\ddbar(f+TF) \ge &C_1h_0 + (g_0-h_0)\\
\ge &C_2g_0
\end{split}
\eee
 for some $C_2>0$ because $g_0\ge h_0$ and $g_0$ is uniformly equivalent to $h_0$.    From the facts that $g_0, h_0$ are equivalent and that all the covariant derivatives of $g_0$ with respect to $h_0$ are bounded, we may conclude that all the covariant derivatives of $f$ are bounded with respect to $g_0$ as well.  We may also conclude from these facts that $F$ and $|\nabla^l\ddbar F|_{g_0}$ are uniformly bounded for $0\le l\le 2$ where we have used that $\ii\ddbar F=-\Ric(g_0)+\Ric(h_0)$.  By Theorem \ref{LottZhang}, we conclude that $T\le T_{g_0}$. From this the result follows.
 \end{proof}

By the theorem, we have   the following monotonicity and continuity of $T_{g}$. Namely

\begin{cor}\label{c-existencetime}
\begin{enumerate} Let $M^n$ be a noncompact complex manifold.
  \item [(i)] Let   $g_0\ge h_0$ be complete uniformly equivalent \K metrics on $M$ with bounded curvature. Then  $T_{g_0}\ge T_{h_0}$.  In particular, if $T_{h_0}=\infty$, then $T_{g_0}=\infty$.

  \item [(ii)] Let $\mathcal{K}$ be the set of complete \K metrics on $M$ with bounded curvature. Then $T_{g}$  is continuous on $ \mathcal{K}$ with respect to the $C^0$ norm in the following sense: Let $g_0\in \mathcal{K}$. Then for $h_0\in  \mathcal{K}$, $T_{h_0}\to T_{g_0}$  as $||h_0-g_0||_{g_0}\to 0$.
  \end{enumerate}
      \end{cor}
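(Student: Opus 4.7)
The plan is to deduce both statements directly from Theorem \ref{t-existencetime}, which gives the fundamental estimate $T_{g_0}\ge a T_{h_0}$ whenever $g_0\in\mathcal{G}(h_0;a)$.

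For part (i), since $g_0\ge h_0$ and the two metrics are uniformly equivalent, the constant $a=1$ works: $g_0\ge 1\cdot h_0\le C(g_0)h_0$, so $g_0\in \mathcal{G}(h_0;1)$ by definition. Applying Theorem \ref{t-existencetime} with $a=1$ yields $T_{g_0}\ge T_{h_0}$, and of course the case $T_{h_0}=\infty$ is an immediate consequence.

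For part (ii), I would apply Theorem \ref{t-existencetime} symmetrically in both directions. First, translate the $C^0$ hypothesis into two-sided pointwise bounds: if $\|h_0-g_0\|_{g_0}<\epsilon$ for small $\epsilon>0$, then at every point the eigenvalues of the symmetric endomorphism $g_0^{-1}(h_0-g_0)$ lie in $(-\epsilon,\epsilon)$, which gives
\[
(1-\epsilon)\,g_0\le h_0\le (1+\epsilon)\,g_0 \quad\text{on }M.
\]
Rearranging, these same inequalities also read $\frac{1}{1+\epsilon}h_0\le g_0\le \frac{1}{1-\epsilon}h_0$. Hence $h_0\in\mathcal{G}(g_0;1-\epsilon)$ and $g_0\in\mathcal{G}(h_0;\frac{1}{1+\epsilon})$, and both metrics are in $\mathcal{K}$ by hypothesis. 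Theorem \ref{t-existencetime} applied in each direction gives $T_{h_0}\ge (1-\epsilon)T_{g_0}$ and $T_{g_0}\ge \frac{1}{1+\epsilon}T_{h_0}$, which combine into
\[
(1-\epsilon)\,T_{g_0}\le T_{h_0}\le (1+\epsilon)\,T_{g_0}.
\]
Letting $\epsilon\to 0$ yields $T_{h_0}\to T_{g_0}$; when $T_{g_0}=\infty$, the lower bound already forces $T_{h_0}=\infty$.

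There is no real obstacle: the only mildly non-routine ingredient is the elementary linear-algebra observation converting the $g_0$-norm bound on $h_0-g_0$ into the sandwich $(1-\epsilon)g_0\le h_0\le (1+\epsilon)g_0$. Once that is in hand, both (i) and (ii) are short consequences of Theorem \ref{t-existencetime}, with part (i) being a single application and part (ii) a symmetric double application.
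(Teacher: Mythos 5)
Your proposal is correct and is exactly the argument the paper intends: the corollary is stated without proof as an immediate consequence of Theorem \ref{t-existencetime}, and your single application for (i) and symmetric double application for (ii), via the sandwich $(1-\epsilon)g_0\le h_0\le(1+\epsilon)g_0$, is the intended deduction.
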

 We can extend the existence time  estimate  in Corollary
4.2 \cite{ChauLiTam}.   Namely,  we can estimate the existence time in terms of upper bound of Ricci curvature instead of upper bound of the bisectional curvature.
\begin{cor}\label{RicciK}
Let $(M,h)$ be a complete K\"ahler metric having bounded curvature with Ricci curvature bounded
above by $K$, and let $g\in\mathcal{G}(h;1)$.  then $T_g\geq\frac{1}{K}$.\end{cor}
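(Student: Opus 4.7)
By Theorem \ref{t-existencetime} with $a=1$ we have $T_g \ge T_h$, so it suffices to show $T_h \ge 1/K$. For each fixed $T < 1/K$, I will verify the Lott--Zhang criterion (Theorem \ref{LottZhang}) with the simplest possible test function $F_T \equiv 0$. Condition (i) then becomes
\begin{equation*}
h - T\Ric(h) \ge (1 - TK)\,h,
\end{equation*}
which is immediate from $\Ric(h) \le Kh$ with $c_T = 1 - TK > 0$, and the part of condition (ii) concerning $F_T$ is trivial.

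The only issue is the remainder of (ii): one needs $|\n^l \Rm(h)|_h$ uniformly bounded for $l = 0, 1, 2$, whereas the hypothesis only supplies the $l = 0$ bound. I plan to handle this with the same regularization used in the proof of Theorem \ref{t-existencetime}: flow $h$ by Shi to $h(\e)$ for small $\e > 0$. By Theorem \ref{t-EvanKrylov}, $h(\e)$ has bounded geometry of order $\infty$, and by Theorem \ref{uniqueness} we have $T_{h(\e)} = T_h - \e$. If, for every $T < 1/K$, the Lott--Zhang criterion can be verified for $h(\e)$ with $F_T \equiv 0$ for sufficiently small $\e$, then $T \le T_{h(\e)} = T_h - \e$, and letting $\e \to 0$ gives $T_h \ge T$ for every $T < 1/K$, hence $T_h \ge 1/K$.

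The main obstacle is therefore the approximate preservation of the Ricci upper bound along the flow,
\begin{equation*}
\Ric(h(\e)) \le K(\e)\,h(\e), \qquad K(\e) \to K \ \text{as}\ \e \to 0.
\end{equation*}
Given such a bound, one simply picks $\e$ small enough that $T < 1/K(\e)$ and applies Theorem \ref{LottZhang} to $h(\e)$ as above. To produce the bound, I plan to combine the standard evolution equation for $\Ric$ under the K\"ahler--Ricci flow (whose right-hand side is $\Delta\Ric$ plus a term quadratic in the curvature) with Shi's uniform curvature estimate on $M\times[0,\e]$, applying a parabolic maximum principle of Chen--Zhu type to the largest eigenvalue $\lambda_{\max}\bigl(h(t)^{-1}\Ric(h(t))\bigr)$. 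A Gronwall-type estimate using the uniform curvature bound should yield $K(\e) \le K + C\e$ with $C = C(n, \sup_M |\Rm(h)|)$, which is more than sufficient for the argument above.
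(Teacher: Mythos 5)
Your proof takes the same route as the paper: the paper's entire argument is to note that $\Ric(h)\le Kh$ gives $h-t\Ric(h)\ge(1-Kt)h$, so Theorem \ref{LottZhang} with $F_T\equiv 0$ yields $T_h\ge 1/K$, and Theorem \ref{t-existencetime} then gives $T_g\ge T_h\ge 1/K$. The additional regularization step you include addresses a point the paper's two-line proof passes over silently, namely that condition (ii) of Theorem \ref{LottZhang} as stated asks for bounds on $|\nabla^l\Rm(h)|_h$ for $l\le 2$ while the hypothesis only bounds the curvature itself; your proposed fix is sound, since the approximate preservation $\Ric(h(t))\le (K+Ct)h(t)$ on $[0,\e]$ does follow from the evolution equation for $\Ric$ together with Shi's uniform curvature bound and a tensor maximum principle (applied, say, to $(K+Ct)h(t)-\Ric(h(t))$ with $C$ chosen large in terms of $n$ and $\sup_M|\Rm(h)|$), although you only sketch this estimate rather than prove it.
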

\begin{proof}
Since $\Ric(h)\leq K h$, so $h-t\Ric(h)=(1-Kt) h$.
By Theorem \ref{LottZhang}, $T_h\geq\frac{1}{K}$. By Theorem
\ref{t-existencetime}, $T_g\geq\frac{1}{K}.$ \end{proof}

\subsection{$C^0$ estimates}

 In this section we will establish $C^0$ bounds for maximal solutions $g(t)$ starting from some $g_0 \in \mathcal{G}(h_0;a,b)$, where the bounds are uniform over all such $g_0$.  We will establish similar bounds for maximal solutions $g(t)$ starting from some $g_0 \in \mathcal{G}(h_0;a,b, \Lambda)$.  We begin with the following:

 \begin{lem}\label{l-dtensor} Let $g_0$ and $h_0$ be complete noncompact \K metrics on $M^n$ with curvature bounded by $k_0$ such that $A^{-1}g_0\le h_0\le Ag_0$. Let $T(n,k_0)>0$ be the existence time of \KR flows with initial data $g_0, h_0$ in Theorem \ref{shishortime}. For any $0<T<T(n,k_0)$, and for any tensor $\Theta$ on $M$. Suppose all the covariant derivatives of $\Theta$ with respect to $h(T)$ are bound. Then the covariant derivatives of $\Theta$ with respect to $g(T)$ are also bounded. Namely, for any $l\ge 0$
$$
 |\nabla_{g(T)}^l\Theta|_{g(T)}\le D_l
$$
where $D_l$ is a constant depending only on $k_0, n, T, A, l$ and the bounds of $|\nabla_{h(T)}^k\Theta|_{h(T)}$ with $0\le k\le l$.

\end{lem}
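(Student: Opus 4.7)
The plan is to use Shi's local derivative estimates to show that both $g(T)$ and $h(T)$ enjoy bounded geometry of order $\infty$, to show they are uniformly equivalent at time $T$, and then to use these two facts to pass derivative bounds for $\Theta$ between the two connections.

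First, by Shi's derivative estimates (as in Theorem \ref{shishortime} and its standard consequences) applied at the positive time $T<T(n,k_0)$, each of $g(T)$ and $h(T)$ has uniformly bounded covariant derivatives of its Riemann tensor of every order, with bounds depending only on $n,k_0,T$; in particular, both metrics have bounded geometry of order $\infty$ with uniform constants. Second, since $|\p_t g|_g=|\Ric(g)|_g\le C(n,k_0)$ along the flow, integration in $t$ gives $e^{-C(n,k_0)T}g_0\le g(T)\le e^{C(n,k_0)T}g_0$ and similarly for $h$; combined with $A^{-1}g_0\le h_0\le Ag_0$ this yields uniform equivalence of $g(T)$ and $h(T)$ with constants depending only on $n,k_0,T,A$.

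The crucial step is to upgrade these two facts to the assertion that $|\n_{h(T)}^m g(T)|_{h(T)}\le C_m$ for every $m$, with $C_m$ depending only on $n,k_0,T,A,m$. For each $x_0\in M$ fix $h(T)$-harmonic coordinates on a ball of uniform radius in which $h(T)$ has $C^k$-bounded components for all $k$, the uniform radius and uniform $C^k$-bounds coming from the bounded geometry of $h(T)$. Because $g(T)$ is uniformly equivalent to $h(T)$ and also has bounded geometry of all orders, a standard elliptic bootstrap in harmonic coordinates (using that the components of $\Ric(g(T))$, and of its covariant derivatives, are second-order elliptic expressions in the components of $g(T)$) gives $C^k$-bounds on the components of $g(T)$ in the same chart, which translates into the desired tensorial bounds.

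Finally, the difference $\Psi=\n_{g(T)}-\n_{h(T)}$ is a globally defined $(1,2)$-tensor expressible algebraically in terms of $g(T)$, $h(T)$ and $\n_{h(T)}g(T)$, so by the previous step and the uniform equivalence of the metrics, $|\n_{h(T)}^j\Psi|_{h(T)}$ is uniformly bounded for every $j$. Iterating the schematic identity $\n_{g(T)}\Theta=\n_{h(T)}\Theta+\Psi\ast\Theta$ expresses $\n_{g(T)}^l\Theta$ as a sum of contractions of $\n_{h(T)}^k\Theta$ for $k\le l$ with polynomial expressions in $\n_{h(T)}^j\Psi$ for $j\le l-1$; applying the hypothesis on $\Theta$ together with the uniform equivalence of $g(T)$ and $h(T)$ then yields the desired bound $|\n_{g(T)}^l\Theta|_{g(T)}\le D_l$. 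The main obstacle is the key step above: turning bounded geometry together with uniform equivalence of the two metrics into uniform $C^k$ comparability of the metric tensors themselves, which is technical but a standard consequence of elliptic regularity in harmonic coordinates combined with Shi's estimates.
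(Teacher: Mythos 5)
Your overall architecture matches the paper's: everything reduces to the single estimate $|\nabla_{h(T)}^k g(T)|_{h(T)}\le C_k$ for $k\le l$, after which converting $\nabla_{g(T)}$-derivatives of $\Theta$ into $\nabla_{h(T)}$-derivatives via the difference tensor of the two connections is routine, exactly as in your last paragraph. The gap is in the step you yourself flag as crucial. The principle you invoke --- that two uniformly equivalent metrics, each with uniformly bounded covariant derivatives of curvature to all orders, are automatically $C^k$-comparable --- is false for general metrics. Take $h$ the Euclidean metric on $\R^n$ and $g=\phi^*h$ with $\phi(x)=x+\e k^{-1}\sin(kx_1)e_1$: then $g$ is flat (all covariant derivatives of $\Rm(g)$ vanish), $g$ is uniformly equivalent to $h$ with constants independent of $k$, yet $|\nabla_h g|_h\sim \e k$ is unbounded as $k\to\infty$. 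Your proposed elliptic bootstrap cannot repair this because $\Ric(g)$ is \emph{not} an elliptic expression in the components of $g$ in coordinates that are harmonic for $h$ rather than for $g$; the Ricci operator degenerates exactly along the diffeomorphism (gauge) directions, which is what the example exploits. Bounded geometry controls a metric only up to diffeomorphism, and nothing in your two inputs pins down the relative gauge between $g(T)$ and $h(T)$.

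What actually closes this step is that $g(t)$ is not an arbitrary metric uniformly equivalent to $h(T)$: it solves the K\"ahler--Ricci flow, i.e.\ a genuinely parabolic complex Monge--Amp\`ere equation for a potential, on $[T/2,T]$, while $h(T)$ has (by Shi's estimates, Theorem \ref{shishortime}) uniformly bounded covariant derivatives of curvature of all orders. The paper simply applies Theorem \ref{t-EvanKrylov} with background metric $h(T)$ to the solution $g(t)$ on $[T/2,T]$, where it is uniformly equivalent to $h(T)$ by the flow equation and the hypothesis $A^{-1}g_0\le h_0\le Ag_0$; this yields precisely $|\nabla_{h(T)}^k g(T)|_{h(T)}\le C_k$ with the stated dependence. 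If you prefer an elliptic route, you must work with the scalar equation $\log\det\lf(h+\ii\ddbar\phi\ri)=\log\det h+F$ for a local potential $\phi$ of $g(T)-h(T)$ in holomorphic coordinates --- that operator is elliptic in $\phi$ and Evans--Krylov applies --- rather than with $\Ric(g)$ as a function of the metric components. As written, your key step is a genuine gap, even though the surrounding structure of the argument is the right one.
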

\begin{proof} By Theorem \ref{shishortime}, the curvature and all the covariant derivatives up to order $l$ of both $h(t), g(t)$ are uniformly bounded on $M\times [\frac T2,T]$ by a constant $C_1$ depending only on $  n, k_0, T, l$. Hence by the \KR flow equations, we have $C_2^{-1}h(T)\le g(t)\le C_2h(T)$ for $t\in [\frac T2,T]$, where $C_2$ is a constant depending only on $n,k_0,T$. By Theorem \ref{t-EvanKrylov}, we conclude that
$$
|\nabla_{h(T)}^kg(T)|_{h(T)}\le C_3
$$
for all $0\le k\le l$ for some constant $C_3$ only on $n,k_0,T,l$. From this, it is easy to see the lemma is true.
\end{proof}

\begin{thm}\label{t-C0-bound} Let $h_0$ be as before.
\begin{enumerate}
  \item [(1)] For any $0<a<b$ and  $\Lambda>0$, and any $0<S<aT_{h_0}$ there is a constant $C$ depending only on $n,S,h_0,a,b, \Lambda$ such that
      $$
      C^{-1}h_0\le g(t)\le Ch_0
      $$
      on $M\times[0,S]$ for   all maximal solutions $g(t)$ starting from some $g_0\in \mathcal{G}(h_0;a,b,\Lambda).$

  \item [(2)]  For any $0<a<b$, and any $0<S<\frac anT_{h_0}$ there is a constant $C$ depending only on $n, s, S,h_0,a,b $ such that
      $$
      C^{-1}h_0\le g(t)\le Ch_0
      $$
      on $M\times[0,S]$ for   all maximal solutions $g(t)$ starting from some $g_0\in \mathcal{G}(h_0;a,b).$
\end{enumerate}

\end{thm}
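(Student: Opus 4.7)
For (1), my plan is to render the construction in the proof of Theorem \ref{t-existencetime} quantitative and then invoke the final $C^0$ assertion of Theorem \ref{LottZhang}. Fix $T\in(S,aT_{h_0})$. The candidate potential is $F_T=af_{T/a}+TF$, where $f_{T/a}$ is the Lott--Zhang potential for $h_0$ at time $T/a<T_{h_0}$ (depending only on $h_0$ and $T/a$) and $F=\log(\omega_0^n/\eta^n)$. The task is to verify that both the ellipticity constant in (i) and the bounds in (ii) of Theorem \ref{LottZhang} can be controlled uniformly over $g_0\in\mathcal{G}(h_0;a,b,\Lambda)$. The pinching $ah_0\le g_0\le bh_0$ gives a uniform lower bound $c_T\ge ac_h/b>0$ and the pointwise bound $|F|\le n\max\{|\log a|,\log b\}$; the identity $\ii\ddbar F=\Ric(h_0)-\Ric(g_0)$, together with $|\Rm(g_0)|_{g_0}\le\Lambda$ and the fixed curvature bound of $h_0$, yields a uniform bound on $|\ii\ddbar F|_{g_0}$; and the Shi-smoothing step already used in the proof of Theorem \ref{t-existencetime} (replace $g_0,h_0$ by $g(\e),h(\e)$ and invoke Lemma \ref{l-dtensor}, then let $\e\to 0$) handles the bounds on $|\n^l\ii\ddbar F|_{g_0}$ for $l=1,2$. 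The last statement of Theorem \ref{LottZhang} then produces $C=C(n,S,h_0,a,b,\Lambda)$ with $C^{-1}g_0\le g(t)\le Cg_0$ on $M\times[0,S]$, and composing with $ah_0\le g_0\le bh_0$ finishes part (1).

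For (2) the curvature of $g_0$ is no longer uniformly controlled, so the construction above produces bounds on $|\n^l\ii\ddbar F|_{g_0}$ that depend on $g_0$. My plan is to obtain uniform one-sided $C^0$ control by parabolic Schwarz-type arguments that do not see the curvature of $g_0$, and then feed this into part (1) applied to the K\"ahler--Ricci flow restarted at some intermediate time $t_0\in(0,s)$ with reference metric $h(t_0)$ (which has uniformly bounded geometry by Shi). The Chern--Lu inequality along \KRF gives
\begin{equation*}
\left(\tfrac{\p}{\p t}-\Delta_{g(t)}\right)\log\tr_{h_0}g(t)\le C_0(h_0)\,\tr_{g(t)}h_0,
\end{equation*}
which, combined with the evolution $(\p_t-\Delta_g)\log\det(g/h_0)=-\tr_g\Ric(h_0)$ and the initial data $\tr_{h_0}g_0\le nb$, $\det(g_0/h_0)\ge a^n$, yields uniform two-sided control of the eigenvalues of $g(t)$ with respect to $h_0$ on $M\times[0,S]$. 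The absorption of the $\tr_g h_0$ term via the AM--GM inequality $\tr_g h_0\ge n(\det(g/h_0))^{-1/n}$ is precisely what forces the time bound $S<aT_{h_0}/n$ and the factor $1/n$ in the statement.

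The main obstacle is the maximum-principle step in (2): on the non-compact $M$ one must justify that the auxiliary quantities built from $\log\tr_{h_0}g$ and $\log\det(g/h_0)$ either attain their supremum or can be controlled by exhaustion. I expect to handle this with a standard Shi-cutoff/exhaustion argument, which is available because $h(t)$ has uniformly bounded geometry on $[0,S]$ and the initial pinching $ah_0\le g_0\le bh_0$ supplies the necessary growth control at infinity. Tracking the constants through the two evolution inequalities is routine but tedious; once the uniform two-sided comparison at some $t_0\in(0,s)$ is in hand, part (1) applied to the pair $(g(t_0),h(t_0))$ delivers the conclusion on $M\times[s,S]$, which matches the intended statement (MT3) in the introduction.
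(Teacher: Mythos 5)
Your overall architecture coincides with the paper's: part (1) is proved by making the potential construction of Theorem \ref{t-existencetime} quantitative and invoking the final statement of Theorem \ref{LottZhang}, and part (2) is reduced to part (1) by first obtaining two-sided $C^0$ control near $t=0$ through a Chern--Lu/Schwarz-lemma argument that never sees $\Rm(g_0)$, then restarting the flow at a small time $t_0$ where Evans--Krylov supplies a curvature bound. (The paper simply cites \cite{ChauLiTam} for that Chern--Lu estimate rather than rederiving it; the factor $1/n$ enters, as you essentially say, in passing from the bound on $\tr_{g}h_0$ to a lower eigenvalue bound for $g$ against $h_0$, not from the existence time of the Schwarz-lemma estimate itself, which is only of order $1/(nK)$ and unrelated to $T_{h_0}$. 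Also note the Chern--Lu step covers $[0,t_0]$ as well, so the conclusion holds on all of $[0,S]$ as stated, not merely $[s,S]$.)

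There is one step in your part (1) that fails as literally written: you cannot ``let $\e\to 0$'' to obtain the bounds on $|\n^l\ddbar F|_{g_0}$, $l=1,2$, required in hypothesis (ii) of Theorem \ref{LottZhang}. Since $\ii\ddbar F=\Ric(h_0)-\Ric(g_0)$, the quantity $|\n\ddbar F|_{g_0}$ involves $\n\Ric(g_0)$, and the class $\mathcal{G}(h_0;a,b,\Lambda)$ constrains only $|\Rm(g_0)|_{g_0}$, not its covariant derivatives; the corresponding bounds at time $\e$ furnished by Theorem \ref{shishortime} degenerate like $\e^{-l/2}$, so no uniform bound survives the limit, and the constant produced by the last statement of Theorem \ref{LottZhang} would blow up with it. The correct execution --- and this is exactly what the paper does --- is to \emph{fix} $\e=\e(n,\Lambda)>0$ and apply Theorem \ref{LottZhang} to the flow restarted at $g(\e)$, with reference metric $\a h(\a^{-1}\e)$ for suitable $\a<1$ and potential $f+SF$ where $F=\log\frac{\det(\a h(\a^{-1}\e))}{\det(g(\e))}$; the derivative bounds then come from Shi's estimates at the fixed time $\e$ together with Lemma \ref{l-dtensor}, and the initial interval $[0,\e]$ is covered separately by Corollary \ref{c-shishortime}. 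With that modification your argument is the paper's.
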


\begin{proof}As before, we may assume that $a=1$.  We first prove part (1).    Let $h(t)$ be the maximal solution starting from $h_0$ and let $g(t)$ be the maximal solution $g(t)$ starting from some $g_0\in \mathcal{G}(h_0;a,b,\Lambda)$.   Let   $0<S<T_{h_0}$  be given.   Fix $\a<1$ such that $\a T_{h_0}>S$. Then $T_{\a h_0}=\a T_{h_0}>S$. We claim that there is $\e>0$ depending only on $n,S,h_0,a,b, \Lambda$ with $S<\a T_{h_0}-\e$  and satisfies the following:

 \begin{enumerate}
  \item [(i)]  $ C^{-1}h_0\le g(t)\le Ch_0
      $ for all $t\leq \e$ and some $C>0$ depending only on $n, \Lambda$;
\item [(ii)]    
 $ 
 \hat h(t)=\a h(\a^{-1}(t+\e))
 $ is the solution  the \KR flow equation   on $M\times[0,S]$ with initial data $\hat h(0)=\a h(\a^{-1}\e)$ such that
 $$
 2b\a^{-1}\hat h(0)=2bh(\a^{-1}\e )\ge g(\e)>\a h(\a^{-1}\e )=\hat h(0);$$ 
 \item[(iii)] and
  there exists a constant $C_1>0$ and a smooth function $f$ with
 \bee
    \hat h(0)-S\Ric(\hat h(0))+\ii\ddbar f\ge C_1 \hat h(0)\eee
    where $C_1$ and the covariant derivatives of $f$ with respect to  $h(\frac\e\a)$ are bounded by constants depending only on $n, h_0, \e, \a$.
\end{enumerate}
 
Indeed, by Theorem \ref{shishortime} and Corollary \ref{c-shishortime} we may choose $\e$ depending only on $n$ and  $\Lambda$ so that  (i) and (ii)  hold because
$S<\a T_{h_0}-\e=T_{\hat h(0)}$.       (iii)  holds for some $C_1, f$ by Theorem \ref{LottZhang}  applied to the solution $\hat h(t)$.

Then letting $F=\displaystyle{\log \frac{\det( \a h(\a^{-1}\e))}{\det(g(\e))}}$ and using (ii) and (iii) 
\begin{equation}\label{S-e}
g(\e)-S\Ric(g(\e))+\ii\ddbar(f+SF) \geq C_2 g(\e)
\end{equation}
for some $C_2>0$ depending only on $\e, a, b, \Lambda$.  Now $\|\nabla^l\Rm(g(\e))\|_{g(\e)}$ and $\|\nabla^l\Rm(h(\e))|_{h(\e)}$ are bounded by constants depending only on $n, h_0, S, a, b, \Lambda$.   Using (i) and Lemma \ref{l-dtensor},  we may conclude that the covariant derivatives of $(f+SF)$ with respect to $g(\e)$ are all bounded by constants depending only on $n, h_0, \Lambda, S, a, b$.   From this and \eqref{S-e}, the last statement in Theorem \ref{LottZhang} gives $ C^{-1}h_0\le \hat g(t+\e) \le Ch_0$ for all $t\leq S$ and some $C$ depending only on $n, h_0, S, a, b, \Lambda$.  We conclude from this and (i) that (1) is true.  This completes the proof of (1).

 To prove (2), by \cite{ChauLiTam} if $g(t)$ is the \KR flow with bounded curvature so that $g(0)=g_0\in \mathcal{G}(h_0;1,b)$, then
 $$
 (\frac1n-2Kt)h_0\le g(t)\le B(t) h_0
 $$
 on $M\times[0,\frac{1}{2nK})$ for some continuous function $B(t)>0$ on $[0, \frac1{2nK})$ which depends only on $n, K$. Here $K\ge 0$ is an upper bound for the bisectional curvature of $h_0$. By Theorem \ref{t-EvanKrylov} and   \ref{shishortime}, one can see that the (2) follows from (1), by considering $g(\e)$ for $\e>0$ small.
\end{proof}

\subsection{higher order estimates}

By the local Evans-Krylov Theory in \cite{E,K,SW}, more precisely by Theorem \ref{t-EvanKrylov}, we may have the following estimates based on the $C^0$ estimates in Theorem \ref{t-C0-bound}.

\begin{thm}\label{t-higherorder} Let $h_0$ be as before.
\begin{enumerate}
  \item [(1)] For any $0<S<aT_{h_0}$ and for all $l\ge0$, there is a constant $C_l$ depending only on $n,S,h_0,a,b, \Lambda$ and $l$.
      \bee \displaystyle \sup_{M\times[0, S]}\| \nabla_{g(t)}^l \Rm(g(t))\|^2_{g(t)} \leq \frac{C_l}{t^l}
       \eee
        for all maximal solutions $g(t)$ starting from some $g_0\in \mathcal{G}(h_0;a,b,\Lambda).$

  \item [(2)]  For any $0<s<S<\frac anT_{h_0}$ there is a constant $C$ depending only on $n, s, S,h_0,a,b $ such that
      \bee \displaystyle \sup_{M\times[s, S]}\| \nabla_{g(t)}^l \Rm(g(t))\|^2_{g(t)} \leq \frac{C}{t^{l+2}}
       \eee
         for all maximal solutions $g(t)$ starting from some $g_0\in \mathcal{G}(h_0;a,b).$
\end{enumerate}

\end{thm}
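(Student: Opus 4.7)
The plan is to combine the $C^0$ equivalence provided by Theorem~\ref{t-C0-bound} with the local parabolic Evans--Krylov estimates of Theorem~\ref{t-EvanKrylov}, and then to extract the stated time decay either via Shi's classical derivative estimates (for part (1)) or via a parabolic rescaling of the flow (for part (2)).

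For part (1), I would first apply Theorem~\ref{t-C0-bound}(1) to obtain a constant $C_0=C_0(n,S,h_0,a,b,\Lambda)$ with $C_0^{-1}h_0\le g(t)\le C_0 h_0$ on $M\times[0,S]$. Since $h_0$ has bounded geometry of all orders, Theorem~\ref{t-EvanKrylov} applied in uniform charts of $h_0$ upgrades this $C^0$ equivalence to uniform higher order control of $g(t)$ relative to $h_0$ throughout $M\times[0,S]$; in particular it gives a uniform bound $\|\Rm(g(t))\|_{g(t)}\le K$ with $K$ depending only on the stated parameters. Shi's local derivative estimates \cite{Shi2}, applied at each $(x_0,t_0)\in M\times(0,S]$ with this uniform curvature bound in place, then yield $\|\nabla^l\Rm(g(t_0))\|^2_{g(t_0)}\le C_l/t_0^l$, as required.

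For part (2) the same $C^0$ input is available from Theorem~\ref{t-C0-bound}(2), but no initial curvature bound on $g_0$ is assumed, so Shi's estimate cannot be invoked directly. Instead, for each $t_0\in[s,S]$ I rescale parabolically by setting $\tilde g(\sigma)=t_0^{-1}g(t_0\sigma)$ for $\sigma\in[0,1]$. Then $\tilde g$ is a bounded curvature \KR flow on $[0,1]$ which is uniformly equivalent to the rescaled background $t_0^{-1}h_0$, with equivalence constants independent of $t_0$; and $t_0^{-1}h_0$ has bounded geometry uniformly in $t_0\in(0,S]$ because the rescaling multiplies sectional curvatures by $t_0\le S$ (keeping them bounded) and multiplies the injectivity radius by $t_0^{-1/2}\ge S^{-1/2}$ (keeping it bounded from below). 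Theorem~\ref{t-EvanKrylov} applied to $\tilde g$ on $\sigma\in[1/2,1]$ therefore produces $\|\tilde\nabla^l\Rm(\tilde g(1))\|^2_{\tilde g(1)}\le C_l$ uniformly in $t_0$, and the scaling identity $\|\tilde\nabla^l\Rm(\tilde g(1))\|^2_{\tilde g(1)}=t_0^{l+2}\|\nabla^l\Rm(g(t_0))\|^2_{g(t_0)}$ yields the desired $C_l/t_0^{l+2}$ bound.

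The main technical care I expect is the uniform applicability of Theorem~\ref{t-EvanKrylov} across the rescaled family in part (2), i.e., checking that all constants remain independent of $t_0$ as $t_0$ ranges over $(0,S]$. This reduces to the uniform bounded geometry of $t_0^{-1}h_0$ noted above, together with a homogeneity computation for the scaling identity on covariant derivatives of $\Rm$ --- routine because $\nabla$ is invariant under constant conformal rescalings. None of these points is deep, but the bookkeeping must be kept straight to land the precise powers $l$ in part (1) and $l+2$ in part (2).
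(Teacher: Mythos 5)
Your overall strategy --- the $C^0$ equivalence of Theorem \ref{t-C0-bound} fed into the local estimates of Theorem \ref{t-EvanKrylov}, with Shi's estimates supplying the $t^{-l}$ decay in part (1) --- is the same as the paper's, and your part (2) is correct: the parabolic rescaling is a valid (if slightly roundabout) substitute for simply reading off the $C_k/t^k$ decay already built into Theorem \ref{t-EvanKrylov}(i), since $\nabla^l_{g}\Rm(g)$ involves at most $l+2$ derivatives of $g$ relative to $h_0$ and the worst terms contribute exactly $t^{-(l+2)}$; your scaling identity and the uniform bounded geometry of $t_0^{-1}h_0$ for $t_0\le S$ both check out.

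The one genuine gap is in part (1), at the step ``Theorem \ref{t-EvanKrylov} \dots in particular gives a uniform bound $\|\Rm(g(t))\|_{g(t)}\le K$'' on $M\times[0,S]$. It does not: the conclusion of Theorem \ref{t-EvanKrylov}(i) is $|\nabla^k_{h_0}g(t)|^2_{h_0}\le C_k/t^k$, which only yields $\|\Rm(g(t))\|^2_{g(t)}\le C/t^2$ and degenerates as $t\to 0$; part (ii) would give a uniform bound, but it requires bounds on $|\nabla^k_{h_0}g_0|_{h_0}$, which are not part of the definition of $\mathcal{G}(h_0;a,b,\Lambda)$. The uniform curvature bound near $t=0$ must instead come from the hypothesis $\|\Rm(g_0)\|_{g_0}\le\Lambda$ via Shi's short-time estimate (Theorem \ref{shishortime}(i) with $l=0$), which controls $\|\Rm(g(t))\|$ for $t\le T'<T(n,\Lambda)$. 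This is exactly the case split the paper makes: for $S<T(n,\Lambda)$ the conclusion is Shi's estimate outright, and only for $t$ bounded away from $0$ does one invoke the $C^0$ bound plus Evans--Krylov (where $t^{-(l+2)}\le Ct^{-l}$ anyway). Once you insert that split, your application of Shi's local derivative estimates at each $(x_0,t_0)$ goes through and the rest of part (1) is fine.
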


\begin{proof} (1) By Theorem \ref{shishortime}, there is a constant $T(n,\Lambda)>0$ such that the conclusion is true for $S<T(n,\Lambda)$. For $aT_{h_0}>S\ge T(n,\Lambda)$,
this follows directly from the $C^0$ bounds in Theorem \ref{t-C0-bound}(1) and  Theorem \ref{t-EvanKrylov}.  The bounds   also follow from the estimates in  \cite{Yu} and the $C^0$ estimates.

(2) also follows from Theorem \ref{t-C0-bound}(2) and  Theorem \ref{t-EvanKrylov}.
\end{proof}

\section{generalizations}

In this section, we will generalize some of the results Theorems \ref{t-existencetime}, \ref{t-C0-bound}, \ref{t-higherorder} without assuming that $g_0$ has bounded curvature. Let us introduce more spaces. Let $(M^n,h_0)$ be a complete noncompact \K manifold with bounded curvature. Let $0<a<b$ be constants. Let  $\text{Cl} \lf(\mathcal{G}(h_0;a,b)\ri)$ be the set all  continuous Hermitian metrics $g_0$ on $M$ such that there exist a sequence $g_{i}\in \mathcal{G}(h_0;a,b)$ with $g_i\to g_0$ uniformly on $M$. Namely, $\sup_M|g_i-g_0|_{h_0}\to 0$ as $i\to\infty$. Let $\text{Cl}_{\text {loc}}\lf(\mathcal{G}(h_0;a,b)\ri)$ be the set all  continuous  Hermitian metrics $g_0$ on $M$ such that there exist a sequence $g_{i}\in \mathcal{G}(h_0;a,b)$ with $g_i\to g_0$ uniformly on compact sets in $M$. For $\Lambda>0$, one can define $\text{Cl}_{\text loc}\lf(\mathcal{G}(h_0;a,b,\Lambda)\ri)$ similarly.

\begin{thm}\label{t-generalization}
Let $(M, h_{0})$ be a complete noncompact \K manifold with bounded curvature and let $0<a<b$ be constants.
\begin{enumerate}
\item
Given any $g_0 \in \text{Cl}_{\text {loc}}\lf(\mathcal{G}(h_0;a,b)\ri)$, there is a solution $g(t)$ to the \KR flow \eqref{krf} on $M\times(0,\frac an T_{h_0})$ such that
\begin{enumerate}
\item[(i)] $g(t)$ is \K for $t\in   (0,\frac anT_{h_0})$ and $g(t) \to g_0$ as  $t\to0$ uniformly on compact sets of $M$;
\item[(ii)] for any $0<S<\frac an T_{h_0}$, there is a constant $C>0$ such that $$C^{-1}h_0\le g(t)\le Ch_0$$ on $M\times[0,S]$
\item[(iii)] for any $0<s<S<\frac anT_{h_0}$, and for any $l\ge0$, we have $$|\nabla^l_{g(t)}\Rm(g(t))|_{g(t)}^2\le \frac{C}{t^{l+2}}.$$
\end{enumerate}

\item If $g_0 \in \text{Cl}_{\text loc}\lf(\mathcal{G}(h_0;a,b,\Lambda)\ri)$ for some $\Lambda>0$, then the conclusion in (1) holds with $\frac an T_{h_0}$ replaced by $aT_{h_0}$.

\item If $g_0\in \text{Cl} \lf(\mathcal{G}(h_0;a,b)\ri)$, then the conclusion in (1) holds with $\frac an T_{h_0}$ replaced by $aT_{h_0}$.

\end{enumerate}

 In particular, if $T_{h_0}=\infty$, then $g(t)$ above exists for all time $t>0$ such that the estimates above hold.
\end{thm}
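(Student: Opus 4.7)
The plan is to approximate $g_0$ by bounded curvature metrics and pass to a smooth subsequential limit. Take $\{g_i\}\subset\mathcal{G}(h_0;a,b)$ with $g_i\to g_0$ locally uniformly on $M$ (the hypothesis of part (1)), and for each $i$ let $g^{(i)}(t)$ be the maximal bounded curvature solution of \eqref{krf} with $g^{(i)}(0)=g_i$. By Theorem \ref{t-existencetime} we have $T_{g_i}\ge aT_{h_0}$, so each $g^{(i)}(t)$ is defined on $M\times[0,aT_{h_0})$.

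For part (1), Theorem \ref{t-C0-bound}(2) furnishes, for every $S<\frac{a}{n}T_{h_0}$, a constant $C$ independent of $i$ with $C^{-1}h_0\le g^{(i)}(t)\le Ch_0$ on $M\times[0,S]$, and Theorem \ref{t-higherorder}(2) furnishes uniform (in $i$) bounds on $|\n^l\Rm(g^{(i)}(t))|_{g^{(i)}(t)}$ on $M\times[s,S]$ for every $0<s<S<\frac{a}{n}T_{h_0}$ and each $l\ge 0$. Combining these with the Evans--Krylov type bound of Theorem \ref{t-EvanKrylov} yields uniform $C^k_{\text{loc}}$ control (with respect to $h_0$) of $g^{(i)}$ on compact subsets of $M\times(0,\frac{a}{n}T_{h_0})$. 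A diagonal Arzel\`a--Ascoli extraction then produces a subsequential smooth limit $g(t)$, which inherits the bounds (ii) and (iii) and solves \eqref{krf}.

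The main obstacle is (i): that $g(t)\to g_0$ locally uniformly on $M$ as $t\to 0$. By the triangle inequality
\bee
|g(t)-g_0|_{h_0}\le |g(t)-g^{(i)}(t)|_{h_0}+|g^{(i)}(t)-g_i|_{h_0}+|g_i-g_0|_{h_0},
\eee
the third term vanishes as $i\to\infty$ by hypothesis, and the first, for each fixed $t>0$, vanishes as $i\to\infty$ by smooth convergence on compacta; the real difficulty is controlling the middle term uniformly in $i$ as $t\to 0$, since no uniform curvature (hence Ricci) bound is available down to $t=0$. I plan to introduce the local potential $U^{(i)}(x,t):=\int_0^t\log\lf(\det g^{(i)}(s)/\det h_0\ri)\,ds$, which by the flow equation satisfies $g^{(i)}(t)-g_i=\ii\ddbar U^{(i)}-t\,\Ric(h_0)$ and is uniformly $O(t)$ in $C^0$ by Theorem \ref{t-C0-bound}(2). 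Combining this $C^0$ decay with the uniform $h_0$-smoothness of $g^{(i)}$ on $M\times[s,S]$ for $s>0$ (Theorem \ref{t-higherorder}(2) together with Theorem \ref{t-EvanKrylov}), a parabolic interpolation forces $|\ii\ddbar U^{(i)}|_{h_0}\to 0$ uniformly in $i$ as $t\to 0$ on compacta, and a suitable diagonal choice $i=i(t)\to\infty$ delivers (i).

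For part (2) the uniform curvature bound $\Lambda$ on the $g_i$ lets one substitute Theorem \ref{t-C0-bound}(1) and Theorem \ref{t-higherorder}(1) for their part (2) counterparts, extending the existence interval from $\frac{a}{n}T_{h_0}$ to $aT_{h_0}$; the rest of the argument is unchanged. Part (3) is more subtle: even though the approximations $g_i$ may have unbounded curvature, the hypothesis of uniform (rather than merely locally uniform) convergence $g_i\to g_0$ on all of $M$ lets one exploit Theorem \ref{LottZhang} globally on $h_0$, transferring the Monge-Amp\`ere potential to $g_0$ modulo a small error in $C^0$ norm and thereby recovering the interval $aT_{h_0}$ without any curvature hypothesis on the approximations. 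The final sentence of the theorem is immediate: if $T_{h_0}=\infty$, the existence intervals in (1)--(3) all become $(0,\infty)$.
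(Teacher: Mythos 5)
Your overall architecture (approximate, get uniform estimates from Theorems \ref{t-C0-bound} and \ref{t-higherorder}, extract a limit) is reasonable, and your treatment of part (2) is essentially sound, but there is a genuine gap at exactly the point you identify as ``the main obstacle'': the convergence $g(t)\to g_0$ as $t\to 0$ in part (1). Your proposed interpolation cannot close it. The only higher-order control available uniformly in $i$ is $|\nabla^l\Rm(g^{(i)}(t))|\le C t^{-(l+2)/2}$, so $\|D^{2k}U^{(i)}(t)\|_{\infty}\lesssim \int_0^t s^{-k}\,ds\approx t^{1-k}$, while $\|U^{(i)}(t)\|_{\infty}\lesssim t$. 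Interpolating, $\|D^2U^{(i)}\|_\infty\lesssim \|U^{(i)}\|_\infty^{1-1/k}\|D^{2k}U^{(i)}\|_\infty^{1/k}\approx t^{1-1/k}\cdot t^{(1-k)/k}=O(1)$ for every $k$: the estimates are scale-critical and interpolation yields boundedness of $\ii\ddbar U^{(i)}$, never decay. This is precisely the delicate point, and the paper does not attempt to reprove it; it invokes \cite[Theorem 4.2]{ChauLiTam} to get a solution on some short interval $[0,T)$ that attains the initial data, and then \emph{extends the time interval} by a restart argument: since $A(t)h_0\le g(t)\le B(t)h_0$ with $A(t)\to a/n$, the smooth bounded-curvature metric $g(\e)$ lies in $\mathcal{G}(h_0;A(\e))$, so Theorem \ref{t-existencetime} gives $T\ge \e+A(\e)T_{h_0}$, and $\e\to 0$ yields the interval $(0,\tfrac{a}{n}T_{h_0})$. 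Either you must import that external convergence result as the paper does, or supply a genuinely new argument; the interpolation as written does not work.

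Part (3) of your proposal is also not viable as stated. You cannot ``exploit Theorem \ref{LottZhang} globally'' for $g_0$: that theorem requires a smooth metric with bounded curvature and bounded derivatives of the relevant potentials, whereas $g_0\in \text{Cl}\lf(\mathcal{G}(h_0;a,b)\ri)$ is merely a continuous Hermitian metric, so $F=\log(\omega_0^n/\eta^n)$ need not even be differentiable. The paper's route is different and simpler: first apply part (1) to get $g(t)$ on $(0,\tfrac an T_{h_0})$ with $g(t)\to g_0$ \emph{uniformly on $M$} (this is where uniform, rather than locally uniform, convergence of the approximants is used); then for small $t_0>0$ the metric $g(t_0)$ is smooth with bounded curvature and satisfies $(a-\d)h_0\le g(t_0)\le (b+\d)h_0$, so Theorems \ref{t-existencetime}, \ref{t-C0-bound} and \ref{t-higherorder} applied to $g(t_0)$ extend the solution and the estimates up to $aT_{h_0}$. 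As a minor point, in part (2) the convergence to the initial data is actually easy and does not need your potential argument: Corollary \ref{c-shishortime}(ii) gives $(1+\e(t))^{-1}g_{i,0}\le g_i(t)\le(1+\e(t))g_{i,0}$ with $\e(t)$ depending only on $n,\Lambda$, which is uniform in $i$.
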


\begin{proof} We first prove (1).  By Theorem 4.2 in \cite{ChauLiTam}, there is some $T>0$ such that the conclusions in (1) hold when replacing $\frac anT_{h_0}$ with $T$.  Assume that $T$ is maximal such that this is true.  By the proof of Lemma 3.1 in \cite{ChauLiTam}, there are continuous functions $A(t), B(t)$ on $[0, T)$
such that $A(t)h_0\leq g(t)\leq B(t)h_0$ with $A(t)\to \frac{a}{n}$
as $t\to0$.  Applying Theorem \ref{t-higherorder} (1) to $g(\e)$ for small $\e$ implies that $T\geq  \e+A(\e)T_{h_0}$.  We conclude that $T\geq \frac anT_{h_0}$ by letting $\e \to 0$.

We now prove part (2).  Let $ \{g_{i,0}\}_{i=1}^{\infty}\subset \mathcal{G}(h_0;a, b,\Lambda)$ such that $g_{i,0}\to g_0$ uniformly in compact sets of $M$.  Then by Theorem \ref{t-C0-bound} (2) and Theorem \ref{t-higherorder} (2), we have a sequence of solutions $g_i(t)$ to \eqref{krf} with initial data $g_{i,0}$ on $M\times(0, aT_{h_0})$ converging to a smooth limit solution $g(t)$  to \eqref{krf} on $M\times(0,a T_{h_0})$.  Moreover, for any $0<S<  a T_{h_0}$, there is a constant $C>0$ independent of $i$ such that $C^{-1}h_0\le g_i(t)\le Ch_0$ on $M\times(0,S]$, and thus there exists a smooth limit solution $g_i(t)\to g(t)$ on $M\times(0,S]$ satisfying (ii).  From this we may prove that $g(t)$ satisfies (i) as in the proof of Theorem 4.1 in \cite{ChauLiTam}.  Moreover by applying Theorem \ref{t-EvanKrylov}  we may also conclude that $g(t)$ satisfies (iii).

To prove (3), let  $g_0\in \text{Cl} \lf(\mathcal{G}(h_0;a,b)\ri)$. By (1), we obtain $g(t)$ satisfying the conclusions in (1). We need to prove that $g(t)$ can be extended up to $aT_{h_0}$ so that the estimates in (1) are true up to $aT_{h_0}$. By the proof of \cite[Theorem 4.1]{ChauLiTam}, we have $g(t)\to g_0$ uniformly on $M$.  Since $ah_0\le g_0\le bh_0$ and $g(t)$ has bounded curvature for $t>0$, by  applying Theorem \ref{t-existencetime}, \ref{t-C0-bound}, \ref{t-higherorder}, the result follows.

\end{proof}

\begin{cor}\label{c-integralbound} Let $(M^n,h_0)$ be a complete noncompact \K manifold with bounded curvature. Suppose $g_0$ is another \K metric on $M$ such that (i) $ah_0\le g_0\le bh_0$ for some $0<a<b$; (ii) $\Ric(g_0)\ge -Kg_0$ for some $K$; and (iii) there exist $r_0>0$, $p_0>n$ and a constant $C$ with
$$
\frac1{V_{g_0}(x,r_0)} \int_{B_{g_0}(x,r_0)}|\Rm(g_0)|^{p_0}\le C
$$
for all $x\in M$. Then there is a solution $g(t)$ to the \KR flow with $g(0)=g_0$ on $M\times[0,aT_{h_0})$ such that for any $0<S<aT_{h_0}$ there is a constant $C$ with
$C^{-1}h_0\le g(t)\le Ch_0$ on $M\times[0,S]$.

\end{cor}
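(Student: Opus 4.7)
The plan is to deduce the corollary from Theorem \ref{t-generalization}(3), which gives existence on $[0, a' T_{h_0})$ whenever $g_0 \in \text{Cl}(\mathcal{G}(h_0; a', b'))$. Since $g_0$ is smooth K\"ahler with $a h_0 \le g_0 \le b h_0$, the only thing missing from membership in $\text{Cl}(\mathcal{G}(h_0; a', b'))$ is a family of bounded-curvature K\"ahler approximants converging to $g_0$ uniformly on all of $M$ (not merely on compact subsets). The whole task therefore reduces to constructing such approximants by exploiting hypotheses (ii) and (iii).

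The natural candidate is $g_\epsilon := g(\epsilon)$, where $g(t)$ is a short-time solution of \eqref{krf} started from $g_0$ itself. Since $g_0$ need not have bounded curvature, Shi's theorem does not apply directly, and this is exactly where (ii) and (iii) are used. The local $L^{p_0}$ curvature bound in (iii), with $p_0 > n$ and constant $C$ independent of the base point $x$, together with the volume comparison coming from (i)--(ii), fits the framework of standard parabolic regularization (e.g.\ Moser iteration for the evolution of $|\Rm|$) and produces uniform-in-$x$ pointwise bounds on $|\Rm(g(t))|_{g(t)}$ for small $t > 0$. Integrating the flow equation $\partial_t g = -\Ric$ against these bounds, combined with the pointwise Ricci lower bound of (ii) at $t = 0$, yields uniform-on-$M$ $C^0$ convergence $g(t) \to g_0$ as $t \to 0$. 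Hence for any $a' < a$, $b' > b$ and $\epsilon$ sufficiently small, $g_\epsilon$ is a bounded-curvature K\"ahler metric in $\mathcal{G}(h_0; a', b')$ and $g_\epsilon \to g_0$ uniformly on $M$.

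With this step completed, $g_0 \in \text{Cl}(\mathcal{G}(h_0; a', b'))$ for every such $a' < a$, $b' > b$. Theorem \ref{t-generalization}(3) then furnishes a solution of \eqref{krf} on $[0, a' T_{h_0})$ with $g(0) = g_0$ and the $C^0$ estimate $C^{-1} h_0 \le g(t) \le C h_0$ on each $[0, S]$ with $S < a' T_{h_0}$. Uniqueness (Theorem \ref{uniqueness}) ensures these solutions agree across different choices of $a', b'$, so sending $a' \nearrow a$ yields a single solution on $[0, a T_{h_0})$ with the claimed estimate. The main obstacle is the first step: producing the short-time flow $g(t)$ and, crucially, upgrading the convergence at $t = 0$ from compact-set (already implicit in Theorem \ref{t-generalization}(1)) to uniform on all of $M$. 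This uniformity is the whole reason the constants $K$ and $C, r_0, p_0$ in (ii) and (iii) are required to be independent of the base point $x$: it is precisely what makes the Moser-type and maximum-principle arguments spatially uniform.
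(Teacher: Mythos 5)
Your proposal follows essentially the same route as the paper: obtain a short-time solution from $g_0$ despite its possibly unbounded curvature, use a time-integrable curvature bound to upgrade $g(t)\to g_0$ to uniform convergence on all of $M$, conclude $g_0\in \text{Cl}\lf(\mathcal{G}(h_0;a',b')\ri)$, and invoke Theorem \ref{t-generalization}. The only difference is that the paper handles the existence step you identify as ``the main obstacle'' by citing \cite{Xu,HuangTam}, which give precisely the bound $|\Rm(g(t))|\le C_1 t^{-n/p_0}$ (integrable since $p_0>n$) rather than carrying out the Moser-type argument you sketch.
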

\begin{proof} By the results in \cite{Xu,HuangTam}, there is a short time solution $g(t)$ to the \KR flow \eqref{krf} with $g(0)= g_0$. Moreover, the curvature has the following bound:
$$
|\Rm(g(t))|\le C_1t^{-\frac n{p_0}}
$$
for some constant $C_1$. Since $p_0>n$, we have $g(t)\to g_0$ as $t\to0$ uniformly on $M$. One can then apply Theorem \ref{t-generalization} to conclude that the corollary is true.

\end{proof}

\appendix

\section{}

In this appendix we collect some known results for easy reference. The first result the basic existence theorem of W.-X. Shi \cite{Shi2}:

\begin{thm}\label{shishortime}  Suppose $g_0$ is a complete \K metric on
a noncompact complex manifold $M^n$ with complex dimension $n$ with curvature bounded by a constant $k_0$.  Then
there exists $0<T\leq \infty$ depending only on $k_0$ and the dimension $n$, and
a smooth solution $g(t)$ to \eqref{krf} on $M \times [0, T)$ with initial
condition $g(0)=g_0$ such that
 \begin{enumerate}

 \item[(i)] $g(t)$ has uniformly bounded curvature on $M\times[0,T']$ for all
$0<T'<T$.  More generally, for any $l\geq 0$ there exists a constant $C_l$
depending only on $l, k_0, n, T'$   such that
     $$
 sup_M |\nabla^l \Rm(g(t))|^2_{g(t)}\le \frac {C_l}{t^{l}},
 $$
 on $M\times[0,T']$.
     \item[(ii)] If $T<\infty$ and $\displaystyle \lim_{t\to T} \sup_{M} |Rm(x,
t)|<\infty$, then the $g(t)$ can be extended, as a solution to \eqref{krf}, beyond $T$ to $T_1>T$ so that
(i) is still true with $T$ replaced by $T_1$.
         \end{enumerate}
\end{thm}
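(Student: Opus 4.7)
The plan is to follow Shi's original strategy in \cite{Shi2}: render the flow strictly parabolic via the DeTurck trick, construct approximating solutions on a compact exhaustion, derive uniform interior Bernstein-type a priori estimates, and pass to a smooth limit on all of $M$. Concretely, for part (i), I would first replace \eqref{krf} with the Ricci--DeTurck flow coupled to the fixed background $g_0$, which is strictly parabolic and admits short-time existence on each member of an exhaustion $\Omega_1\Subset\Omega_2\Subset\cdots\nearrow M$ with Dirichlet-type boundary data pinning the evolving metric to $g_0$ along $\p\Omega_i$. Solutions of the Ricci--DeTurck flow are converted back to solutions of \eqref{krf} via the associated one-parameter family of diffeomorphisms, and the \K condition is preserved along the way.

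The main obstacle is establishing Bernstein estimates uniform in the approximation index $i$. For the curvature itself I would exploit the standard evolution inequality
\bee
\lf(\frac{\p}{\p t}-\Delta\ri)|\Rm|^2 \le -2|\n \Rm|^2 + C_n|\Rm|^3,
\eee
and compare via the maximum principle with the blow-up solution of the scalar ODE $\varphi'=C_n\varphi^{3/2}$ starting from $\varphi(0)=k_0^2$; this yields $|\Rm(g_i(t))|\le 2k_0$ on a uniform time interval $[0,T(n,k_0))$, with boundary effects localized via cut-offs supported away from $\p\Omega_i$. For the higher derivative decay I would induct on $l$ using a combined Bernstein quantity
\bee
\Phi_l = t^l|\n^l \Rm|^2 + \sum_{k<l} A_k\, t^k|\n^k \Rm|^2,
\eee
with coefficients $A_k$ chosen large enough that the bad terms appearing in $(\p_t-\Delta)\Phi_l$ are absorbed by the good gradient terms; the maximum principle then yields $|\n^l \Rm(g_i(t))|^2\le C_l/t^l$, with $C_l$ depending only on $n,k_0,l,T'$. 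A diagonal Arzel\`a--Ascoli extraction now produces a $C^\infty_{\text{loc}}$ subsequential limit $g(t)$ on $M\times[0,T(n,k_0))$ solving \eqref{krf} with $g(0)=g_0$ and satisfying all the stated estimates; completeness of each $g(t)$ follows from the curvature bound together with completeness of $g_0$.

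For part (ii), if $\sup_M|\Rm(g(t))|$ remains uniformly bounded as $t\to T<\infty$, then the flow equation $\p_t g=-\Ric$ together with the derivative decay from (i) shows that $g(t)$ and all its space--time derivatives extend continuously and smoothly up to $t=T$, producing a complete smooth \K metric $g(T)$ of bounded curvature. Applying part (i) afresh with initial data $g(T)$ yields a solution on $[T,T+T'')$ for some $T''>0$, which glues with $g(t)$ to extend \eqref{krf} past $T$, as claimed.
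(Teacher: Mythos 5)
This statement is Theorem A.1 of the paper, quoted from Shi \cite{Shi2} and collected in the appendix ``for easy reference''; the paper gives no proof of it, so there is nothing internal to compare your argument against. Your outline is, in substance, Shi's own strategy (DeTurck gauge, exhaustion by compact domains with Dirichlet data, uniform interior Bernstein estimates, diagonal limit), so the route is the right one rather than a different one.

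That said, as a proof sketch it has a genuine gap at the technical heart of the argument: the localized maximum principle for $|\Rm|^2$ cannot be run as you describe until one first establishes a uniform-in-$i$ two-sided bound $C^{-1}g_0\le g_i(t)\le Cg_0$ on the exhaustion pieces. The cut-off functions you invoke must have their gradient and Laplacian controlled with respect to the \emph{evolving} metrics $g_i(t)$, and without the prior $C^0$ equivalence (obtained in Shi's proof by applying the maximum principle to $\mathrm{tr}_{g_0}g_i$ and $\mathrm{tr}_{g_i}g_0$, using the boundary condition $g_i=g_0$ on $\p\Omega_i$) those terms are uncontrolled; this zeroth-order step, uniform in $i$ and up to the boundary, is precisely the hard part of Shi's paper and is absent from your sketch. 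Two smaller points: the preservation of the K\"ahler condition on a complete noncompact manifold is asserted in one clause but is itself a nontrivial matter in Shi's treatment (it is not automatic from the local argument that the Ricci form is closed, since one must rule out the limit solution drifting out of the K\"ahler class without compactness); and in part (ii) the gluing at $t=T$ should be accompanied by a remark that the one-sided time derivatives match because both equal $-\Ric(g(T))$, so the concatenation is a genuine smooth solution. With the $C^0$ equivalence step supplied and these points addressed, your outline is a correct account of the standard proof.
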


The following is an immediate consequence of the theorem:
\begin{cor}\label{c-shishortime}   Let $g(t)$ be as in Theorem \ref{shishortime}.  Then
\begin{enumerate}

\item[(i)] $g(t)$ is \K and equivalent to $g_0$ for all $t\in [0, T)$, namely for all $T'<T$, there a constant $C>0$ depending only on  $k_0, n, T'$ such that
     $$
     C^{-1}g_0\le g(t)\le C g_0
     $$
     on $M\times[0,T']$
\item [(ii)]   There is a continuous function $\e(t)>0$ depending only on $n, k_0$ with $\e(t)\to0$ as $t\to 0$  such that
$$
(1+\e(t))^{-1}g_0\le g(t)\le (1+\e(t))g_0.
$$

\end{enumerate}
\end{cor}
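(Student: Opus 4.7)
Both parts are meant as immediate consequences of the bounded-curvature conclusion of Theorem \ref{shishortime}(i). The plan breaks into three steps.

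\emph{Step 1: K\"ahler property.} In local holomorphic coordinates the flow $\partial_t g_{i\bar j} = -R_{i\bar j}$ only couples the mixed $(1,1)$-components, while the purely holomorphic components $g_{ij}$, $g_{\bar i\bar j}$ satisfy a linear parabolic system with vanishing initial data and bounded coefficients (from Theorem \ref{shishortime}(i)); uniqueness in the bounded-curvature class forces them to remain identically zero. Equivalently, since the Ricci form is closed and of type $(1,1)$ on a K\"ahler metric, the form $\omega(t) = \omega_0 - \int_0^t \rho(g(s))\,ds$ stays a closed $(1,1)$-form, so $g(t)$ remains K\"ahler.

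\emph{Step 2: Uniform equivalence for (i).} Fix $0 < T' < T$ and let $K = K(n, k_0, T')$ be the $l=0$ curvature bound from Theorem \ref{shishortime}(i), so $|\Ric(g(t))|_{g(t)} \le K$ on $M \times [0, T']$. At each point $p$, let $\lambda_1(t), \ldots, \lambda_n(t)$ be the eigenvalues of $A(t) := g_0^{-1} g(t)$, a $g_0$-self-adjoint positive operator; these are Lipschitz in $t$ with $\lambda_i(0) = 1$. For a $g_0$-unit eigenvector $v_i$ at time $t_0$, the flow equation gives $\dot\lambda_i(t_0) = -\Ric(g(t_0))(v_i, v_i)$, and rescaling $v_i$ to have unit $g(t_0)$-length yields $|\Ric(v_i, v_i)| \le \lambda_i(t_0)\cdot|\Ric(g(t_0))|_{g(t_0)} \le \lambda_i(t_0)\,K$. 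Hence $|\tfrac{d}{dt}\log\lambda_i| \le K$ a.e., and integration gives
\[ e^{-Kt}\, g_0 \le g(t) \le e^{Kt}\, g_0 \qquad \text{on } M \times [0, T'], \]
which is (i) with $C = e^{KT'}$.

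\emph{Step 3: Refined equivalence for (ii).} Since the existence time $T$ from Theorem \ref{shishortime} depends only on $n$ and $k_0$, fix $\tau = \tau(n, k_0) < T$ (for instance $\tau = T/2$) and apply Step 2 on $[0, \tau]$ to obtain a curvature bound $K' = K'(n, k_0)$. Setting $\epsilon(t) := e^{K't} - 1$ on $[0, \tau]$ yields a continuous positive function depending only on $n, k_0$ with $\epsilon(t) \to 0$ as $t \to 0$, verifying (ii). The only mildly nontrivial point in the argument is the eigenvalue ODE in Step 2, which is handled once one notes that the $g(t)$-operator norm of the Ricci endomorphism is dominated by $|\Ric(g(t))|_{g(t)}$; everything else is routine given Shi's curvature estimate.
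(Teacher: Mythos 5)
Your argument is correct and is exactly the standard one the authors have in mind: the paper offers no proof, calling the corollary an immediate consequence of Shi's theorem, and your Step 2 (integrating $|\partial_t \log g(t)(v,v)| = |\Ric(v,v)|/g(t)(v,v) \le K$ to get $e^{-Kt}g_0 \le g(t) \le e^{Kt}g_0$) together with the observation that $T$ and the curvature bound depend only on $n,k_0$ is precisely what makes it immediate. The only cosmetic point is that in (ii) you establish the estimate only on $[0,\tau]$ with $\tau=T/2$, but since the $l=0$ bound $K(T')$ depends only on $n,k_0,T'$ you could equally set $\e(t)=e^{K(t)t}-1$ on all of $[0,T)$; in any case the paper only ever uses (ii) for small $t$.
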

Using Evan-Krylov theory \cite{E,K} or using maximum principle for \KR flow by Sherman-Weinkove \cite{SW}, we have the following (see \cite{ChauLiTam}):
 \begin{thm}\label{t-EvanKrylov}  Let $(M^n,h_0)$ be a complete noncompact
\K manifold with bounded geometry of infinite order. Let $g(t)$ be a solution
of the \KR flow \eqref{krf} on $M\times[0,T)$ with initial data $g_0$ which is a
complete \K metric.  For any $x\in M$, suppose there is a constant $N>0$, such
that
 $$
 N^{-1}h_0\le g(t)\le Nh_0
 $$
 on $ B_{h_0}(x,1)\times [0,T)$.  Then

  \begin{itemize}
    \item [(i)]
 $$
 | \nabla_{h_0}^k g(t)|^2_{h_0}\leq \frac{C_k}{t^{k}}
 $$
 on $  B_{h_0}(x,1/2)\times(0, T)$, for some constant $C_k$ depending only on $k$,
 $n$, $T$ and $N$, and the bounds of $|\nabla^l\Rm(h_0)|_{h_0}$.

 \item[(ii)] If we assume   $|\nabla_{h_0}^k g_0|_{h_0}^2$  is bounded in $
B_{h_0}(x,1)$ by $c_k$, for $k\ge 1$,  then
  $$
  |\nabla_{h_0} ^k g(t)|^2_{h_0}\le {C_k}  ,
 $$
  on $  B_{h_0}(x,1/2)\times[0, T)$ for some constant $C_k$ depending only on $k$,
$c_k$, $n$, $T$ and $N$.
  \end{itemize}
\end{thm}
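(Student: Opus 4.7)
My plan is to combine a local reduction to the parabolic complex Monge--Amp\`ere equation with Evans--Krylov and Schauder iteration, following the scheme used by Sherman--Weinkove \cite{SW} and in \cite{ChauLiTam}.

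\textbf{Step 1 (localize via a \K potential).}  Since $(M,h_0)$ has bounded geometry of infinite order, fix an $h_0$-geodesic ball $B=B_{h_0}(x,3/4)$, on which the \K form of $h_0$ admits a smooth potential $\rho$ with uniform $C^k$ bounds coming from the geometry.  For each $t\in[0,T)$, the closed $(1,1)$-form $\omega_{g(t)}-\omega_{h_0}$ is $\ddbar$-exact on $B$ (by shrinking $B$ slightly if needed), so we obtain a smooth local potential $\varphi(\cdot,t)$ with $g_\ijb(t)=(h_0)_\ijb+\p_i\p_{\bar\jmath}\varphi$, and the \KRF equation \eqref{krf} becomes the parabolic complex Monge--Amp\`ere equation
$$
\frac{\p\varphi}{\p t}\;=\;\log\frac{\ddet\bigl((h_0)_\ijb+\p_i\p_{\bar\jmath}\varphi\bigr)}{\ddet(h_0)_\ijb}\;+\;\psi,
$$
where $\psi$ is a smooth function on $B$ depending only on $h_0$ and (through an additive constant in time) the initial data, with $C^k$ bounds controlled by $h_0$.

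\textbf{Step 2 (uniform ellipticity and Evans--Krylov).}  The hypothesis $N^{-1}h_0\le g(t)\le Nh_0$ gives uniform upper and lower bounds on the eigenvalues of $h_0+\ddbar\varphi$ relative to $h_0$, so the Monge--Amp\`ere equation above is uniformly parabolic with structural constants depending only on $N$ and $n$.  Apply the interior parabolic Evans--Krylov estimate \cite{E,K} to obtain, on $B_{h_0}(x,5/8)\times(0,T)$, a uniform bound on the parabolic $C^{2,\a}$ norm of $\varphi$ for some $\a\in(0,1)$, with constants depending only on $N$, $n$, $T$, and the geometry of $h_0$.  This in particular gives a $C^{\a}$ bound on $g(t)$.

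\textbf{Step 3 (Schauder bootstrap).}  With $g(t)\in C^\a$, the coefficients of the operator linearizing the Monge--Amp\`ere equation are $C^\a$.  Differentiating the equation spatially and applying parabolic Schauder theory inductively produces $C^{k,\a}$ bounds on $\varphi$ on $B_{h_0}(x,1/2)$, hence the required bounds on $|\n_{h_0}^k g(t)|_{h_0}$.  For part (ii), the assumed bounds on $|\n_{h_0}^k g_0|_{h_0}$ provide matching data at $t=0$, so each Schauder step may be carried out in a fixed-size parabolic cylinder up to the initial time, giving the time-uniform bound $|\n_{h_0}^k g(t)|^2_{h_0}\le C_k$.

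\textbf{Step 4 (the $t^{-k}$ factor in (i)).}  Without initial regularity, run the Schauder iteration at time $t$ on a parabolic cylinder of spatial size $\sqrt{t}$, or equivalently rescale $(y,s)=(\sqrt{t}\,y,\,t\,s)$ on a unit cylinder and transfer the Evans--Krylov bound of Step~2 (which is scale-invariant under parabolic rescaling, using the $C^0$ bound on $g$).  Each spatial derivative of $g$ under this rescaling acquires a factor of $t^{-1/2}$, so $k$ derivatives of $g$ give the estimate $|\n_{h_0}^k g(t)|_{h_0}^2\le C_k t^{-k}$ on $B_{h_0}(x,1/2)$.

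The main obstacle I expect is Step 2: passing from a pure $C^0$ metric equivalence to genuine $C^\a$ regularity of $g(t)$ by an Evans--Krylov-type argument on the complex Monge--Amp\`ere equation, interior in a ball on a noncompact manifold, with constants depending only on $N$ and the geometry of $h_0$ rather than on $\varphi$ itself.  Once this $C^\a$ estimate is in hand, Steps~3 and~4 are essentially standard parabolic Schauder plus parabolic rescaling.
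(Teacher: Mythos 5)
The paper does not actually prove this theorem: it is stated in the Appendix as a known result, quoted from \cite{ChauLiTam} with the remark that it follows either from Evans--Krylov theory \cite{E,K} or from the Sherman--Weinkove maximum principle estimates \cite{SW}. Your proposal is a reasonable reconstruction of the first of these two cited routes (local potential, parabolic complex Monge--Amp\`ere, Evans--Krylov, Schauder bootstrap, parabolic rescaling for the $t^{-k}$ factor), so in that sense it is aligned with what the paper points to rather than a divergent argument. Two remarks on where the real work sits. First, the step you flag as the main obstacle is indeed the crux, but the difficulty is not only the $C^0\to C^\alpha$ gain: before Evans--Krylov can be applied you must produce a local potential $\varphi$ with \emph{a priori} $C^0$ and $C^1$ bounds depending only on $N$, $n$, $T$ and $h_0$, whereas the hypotheses only control $\ddbar\varphi$ (through the metric equivalence) and $\partial\varphi/\partial t$ (through $\log\det$); normalizing the pluriharmonic ambiguity in $\varphi_0$ so that $\|\varphi_0\|_{C^1}$ is controlled on a fixed ball requires a separate elliptic argument, and the ``bounded geometry of infinite order'' hypothesis on $h_0$ is exactly what lets you fix holomorphic coordinate charts of uniform size in which to do this. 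This is precisely the technicality that the alternative route in \cite{SW} avoids, by running maximum principle arguments directly on quantities such as $\mathrm{tr}_{h_0}g$, $|\nabla_{h_0}g|^2_{h_0}$ and their higher analogues, with cutoff functions built from $h_0$; that approach buys independence from any potential normalization at the cost of longer tensor computations. Second, in your Step 4 the parabolic rescaling must be done with respect to coordinates adapted to $h_0$ on balls of radius comparable to $\sqrt{t}$, and you should check that the rescaled background metric still has uniformly bounded geometry after dilation (it does, and improves, but this is the point at which the dependence on the bounds of $|\nabla^l\Rm(h_0)|_{h_0}$ enters the constants $C_k$, as the statement records). With these points addressed your outline matches the standard proof.
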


We also need the following uniqueness theorem on \KR flow \cite{ChauLiTam2}:

\begin{thm}\label{uniqueness}
Let $(M^n,h_0)$ be a complete noncompact \K manifold with bounded curvature.
 Let $g_1(x,t)$ and $g_2(x,t)$ be two solutions of the \KR flow \eqref{krf} on $M\times[0,T]$ with the same initial data $g_0(x)=g_1(x,0)=g_2(x,0)$. Suppose there is a constant $C$ such that:
 \bee
 C^{-1}h_0\le  g_1(t),    g_2(t)\le Ch_0
\eee
on $M\times[0,T]$. Then $g_1\equiv g_2$ on $M\times[0,T]$.
 \end{thm}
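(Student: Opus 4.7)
The plan is to combine a known short-time existence result for $\KRF$ with rough initial data with Theorem~\ref{t-generalization}(3). Specifically, I would first invoke the short-time existence theorems of Xu and Huang--Tam, which apply precisely under the Ricci lower bound (ii) and local integral curvature bound (iii) with $p_0>n$. These produce a solution $g(t)$ of \eqref{krf} on some $M\times[0,T_0)$ with $g(0)=g_0$ and an explicit curvature decay
\begin{equation*}
\sup_M |\Rm(g(t))|_{g(t)} \le C_1 t^{-n/p_0}.
\end{equation*}
The hypothesis $p_0>n$ makes the exponent $n/p_0$ strictly less than $1$, which is the key quantitative input.

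Next I would translate this curvature bound into uniform convergence $g(t)\to g_0$ on all of $M$. From the flow equation $\partial_t g_{\ijb}=-R_{\ijb}$ and the curvature estimate above, $|\partial_t g(t)|_{g(t)}$ is integrable in $t$ on $[0,T_0)$, so integrating yields $\sup_M \|g(t)-g_0\|_{h_0}\le C_2 t^{1-n/p_0}\to 0$ as $t\to 0$ (using also the two-sided comparison $ah_0\le g_0\le bh_0$ to move between $g(t)$ and $h_0$ norms, which holds for small $t$ by continuity). In particular, for any $\e>0$ we can find $t_\e>0$ with $(a-\e)h_0 \le g(t_\e) \le (b+\e)h_0$, and since $g(t_\e)$ has bounded curvature by Theorem~\ref{shishortime}, $g(t_\e)\in \mathcal{G}(h_0;a-\e,b+\e)$. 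Combined with the uniform convergence, this shows $g_0\in \text{Cl}(\mathcal{G}(h_0;a-\e,b+\e))$ for every $\e>0$.

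With $g_0$ in these closure classes, Theorem~\ref{t-generalization}(3) produces, for each $\e>0$, a solution $\tilde g_\e(t)$ of \eqref{krf} with $\tilde g_\e(0)=g_0$ on $M\times[0,(a-\e)T_{h_0})$ satisfying the two-sided bound $C_\e^{-1}h_0\le \tilde g_\e(t)\le C_\e h_0$ on each subinterval $[0,S]\subset [0,(a-\e)T_{h_0})$. The uniqueness Theorem~\ref{uniqueness} forces these solutions to agree on their common domain, so they glue to a single solution $g(t)$ on $M\times[0,aT_{h_0})$: given any $S<aT_{h_0}$, choose $\e$ small enough that $S<(a-\e)T_{h_0}$ and take the $C^0$ bound from the corresponding $\tilde g_\e$.

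The main obstacle is the passage from local-on-compact to global-on-$M$ convergence of $g(t)$ to $g_0$, since Theorem~\ref{t-generalization}(3), as opposed to (1)--(2), requires $g_0\in \text{Cl}(\mathcal{G}(h_0;\cdot,\cdot))$ (uniform, not merely locally uniform, closure) in order to reach the sharper existence time $aT_{h_0}$. The integrability exponent $1-n/p_0>0$ coming from $p_0>n$ is exactly what is needed to upgrade the pointwise-in-time curvature bound to a uniform $C^0$ bound on $g(t)-g_0$ on all of $M$; any weaker hypothesis would only give locally uniform convergence and drop us into case (1) with the weaker time bound $\frac{a}{n}T_{h_0}$.
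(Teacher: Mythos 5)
Your proposal does not address the statement you were asked to prove. The statement is Theorem~\ref{uniqueness}: given \emph{two} solutions $g_1(t)$, $g_2(t)$ of \eqref{krf} on $M\times[0,T]$ with the same initial data, both uniformly equivalent to a fixed bounded-curvature background $h_0$, show $g_1\equiv g_2$. What you have written is instead an existence argument for a solution starting from a metric satisfying a Ricci lower bound and a local $L^{p_0}$ curvature bound with $p_0>n$ --- that is, you have sketched a proof of Corollary~\ref{c-integralbound}, not of the uniqueness theorem. Nothing in your argument compares two given solutions, sets up an evolution inequality for a quantity measuring their difference (e.g.\ $\operatorname{tr}_{g_1}g_2+\operatorname{tr}_{g_2}g_1-2n$ or the potentials in a parabolic Monge--Amp\`ere reformulation), or invokes a maximum principle on a complete noncompact manifold, which is the kind of machinery a uniqueness proof in this setting requires.

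Worse, your argument explicitly \emph{uses} Theorem~\ref{uniqueness} as an input (``The uniqueness Theorem~\ref{uniqueness} forces these solutions to agree on their common domain\ldots''), so even if one tried to repurpose it, it would be circular. For reference, the paper does not prove this theorem internally either: it is quoted in the appendix from the authors' companion paper \cite{ChauLiTam2}, where the proof proceeds by entirely different means from anything in your sketch. You would need to start over: fix the two solutions $g_1,g_2$, use the uniform equivalence $C^{-1}h_0\le g_i(t)\le Ch_0$ to control the geometry, and run a comparison/maximum-principle argument showing the difference vanishes identically.
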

.

\end{document}